\def\eps{\varepsilon}
\def\be{\begin{equation}}
\def\ee{\end{equation}}
\def\ba{\begin{align}}
\def\bm{\begin{multline}}
\def\bfig{\begin{figure}[htb]}
\def\efig{\end{figure}}
\numberwithin{equation}{section}
\newtheorem{theorem}{Theorem}[section]
\newtheorem{proposition}[theorem]{Proposition}
\newtheorem{lemma}[theorem]{Lemma}
\newtheorem{remark}{Remark}
\newcommand{\R}{\mathbb{R}}
\newcommand{\si}{\sigma}
\newcommand{\Prob}{\mathbb{P}}
\newcommand{\ve}{\varepsilon}
\newcommand{\Var}{\text{Var}}
\newcommand{\deter}{\text{det}}
\newcommand{\tU}{\tilde{U}}
\newcommand{\tH}{\tilde{H}}
\newcommand{\mcO}{\mathcal{O}}
\newcommand{\mcD}{\mathcal{D}}
\newcommand{\sez}{\sigma \downarrow 0}
\DeclareMathSymbol{\leqslant}{\mathalpha}{AMSa}{"36}
\DeclareMathSymbol{\geqslant}{\mathalpha}{AMSa}{"3E}
\DeclareMathSymbol{\doteqdot}{\mathalpha}{AMSa}{"2B}
\DeclareMathSymbol{\circlearrowright}{\mathalpha}{AMSa}{"08}
\DeclareMathSymbol{\subsetneq}{\mathalpha}{AMSb}{"28}
\DeclareMathSymbol{\supsetneq}{\mathalpha}{AMSb}{"29}
\renewcommand{\leq}{\;\leqslant\;}
\renewcommand{\geq}{\;\geqslant\;}
\newcommand{\isdefby}{\; := \;}
\newcommand{\bydefis}{\; =: \;}
\newcommand{\dd}{{\rm d}}
\newcommand{\e}[1]{\,{\rm e}^{#1}\,}
\newcommand{\upchi}{\raise 2pt \hbox{$\chi$}}
\def\writefig#1 #2 #3 {\rlap{\kern #1 truecm \raise #2 truecm
\hbox{#3}}}
\newcommand{\caC}{{\mathcal C}}
\newcommand{\caD}{{\mathcal D}}
\newcommand{\bbR}{{\mathbb R}}
\newcommand{\bsx}{{\boldsymbol x}}
\begin{document}


\title{Breaking the chain}
\author{Michael Allman and Volker Betz}

\address{Michael Allman \hfill\newline
\indent Department of Mathematics \hfill\newline
\indent University of Warwick \hfill\newline
\indent Coventry, CV4 7AL, England \hfill
}
\email{m.j.allman@warwick.ac.uk}

\address{Volker Betz \hfill\newline
\indent Department of Mathematics \hfill\newline
\indent University of Warwick \hfill\newline
\indent Coventry, CV4 7AL, England \hfill\newline
{\small\rm\indent http://www.maths.warwick.ac.uk/$\sim$betz/} 
}
\email{v.m.betz@warwick.ac.uk}

\maketitle

\begin{quote}
{\small
{\bf Abstract.}
We consider the motion of a Brownian particle in $\R$, moving between a particle fixed at the origin and another moving deterministically away at slow speed $\ve>0$.  The middle particle interacts with its neighbours via a potential of finite range $b>0$, with a unique minimum at $a>0$, where $b<2a$.  We say that the chain of particles breaks on the left- or right-hand side when the middle particle is greater than a distance $b$ from its left or right neighbour, respectively.  We study the asymptotic location of the first break of the chain in the limit of small noise, in the case where $\ve = \ve(\sigma)$ and $\si>0$ is the noise intensity.
}  

\vspace{1mm}
\noindent
{\footnotesize {\it Keywords:} first-exit from space-time domains, interacting Brownian particles, asymptotic theory}

\vspace{1mm}
\noindent
{\footnotesize {\it 2000 Math.\ Subj.\ Class.:} 60J70}
\end{quote}


\section{Introduction}

We are interested in the behaviour of a chain of interacting particles while it is pulled beyond its breaking point. Obvious real world examples would include the tearing of a band of rubber, or a rope, and obvious questions would be how much strain a given chain can endure before breaking, and where the breakpoint will be located once it occurs. 

A model for such a chain is given by a collection of interacting Brownian particles i.e. one investigates solutions of the SDE system 
\be \label{long chain}
\dd x_i(t) = - \frac{\partial H}{\partial x_i}(\bsx(t)) \, \dd t + \sigma \dd W_i(t),
\ee
where $(W_i)_{1 \leq i \leq N}$ are independent Brownian motions, $\bsx = (x_1, \ldots x_N) \in \bbR^N$ is the collection of particle positions, and $\sigma$ is the (small) noise intensity. The potential energy of the chain is given by 
\[
H(\bsx) = \sum_{1 \leq i < j \leq N} U(x_i-x_j)
\]
for some pair potential $U$. We now exert strain on this chain of interacting Brownian particles.  This is done by solving (\ref{long chain}) only for $2 \leq i \leq N-1$, fixing $x_1=0$ and pulling $x_N$ outwards with (slow) speed $\eps$; the starting configuration of the chain should be a stable equilibrium, ideally a global minimum configuration of the potential energy. The mathematical questions corresponding to the problems above are then about the expected time of a (still to be defined) breaking event, and its location along the chain. In our case, the potential $U$ will have compact support, on $|x| \leq K$, say, and the breaking event will occur when the distance between two given particles is greater than $K$. 

The model (\ref{long chain}) is widely used in material science to model the dynamics of crystals, in particular the propagation of cracks. Investigations there are purely numerical, and the main difficulty is the sheer size of the system under consideration. Out of the vast literature on the topic, we only mention \cite{EMcC78,BvG82,PE02,WHV92} and the references therein. 

In mathematics, a model of type (\ref{long chain}) has recently been investigated by 
T.\ Funaki \cite{fun1, fun2}. He studies the free motion of a, possibly multi-dimensional, crystal of interacting Brownian particles. In the limit of zero temperature, and under suitable assumptions on $U$, he shows  that if the system is initially rigidly crystallized, then it stays so for macroscopic time, and that the crystal as a whole performs Brownian motion both in the translational and in the rotational degrees of freedom. This is, in some sense, the opposite situation to when the crystal is torn apart by force. 

It is clear that in the situation of stretching the chain (\ref{long chain}) until two particles are more than $K$ apart, we are looking at a first exit problem from a time-dependent domain. Also, although the chain is one-dimensional, the first-exit problem is not, indeed it is $(N-2)$-dimensional. 

The problem of first-exit from a stationary potential well has been studied in great detail.  In \cite{fw}, the expected exit time from such a well is shown to behave asympotically like $\e{2h/\sigma^2}$, where $h$ is the height of potential well to overcome.  This is in agreement with the classical Eyring-Kramers formula \cite{ey,kr}.  In the multidimensional case, a proof of the expected exit time, with prefactor, has been given only recently \cite{ab}.

The case of a moving potential well is even more difficult and thus for the time being we settle for a further simplification: we take $N=3$, and $U$ as a cut-off strictly convex potential. In this case, only $x_2$ is moving, and so the problem to solve now is the exit of a one-dimensional stochastic process from a time-dependent domain, which still is a rather difficult and interesting topic,
and is related to the theory of stochastic resonance \cite{nb:pathwise, nb:beyond, nb:sample-paths}. Additionally, while in \cite{nb:pathwise, nb:beyond, nb:sample-paths} usually only the exit time distribution is of interest, we will need to know on which side of the domain the exit occurs. This question cannot, to our knowledge, be answered by any previously available results. So we solve it by direct investigation of the SDE, using some of the theory from \cite{nb, nb:pathwise}.

Our main result is Theorem \ref{main}. Roughly speaking, it says that for pulling speed $\eps$ and noise level $\sigma$ both going to zero, the chain will almost surely break on the right hand side if $\ve > \sigma \sqrt{|\ln \sigma|}$, while it will break on either side with probability $1/2$ when $\ve < \sigma/ \sqrt{|\ln \sigma|}$. This corresponds to the intuition that pulling too fast will just rip off the final 
particle of the chain, as the noise does not have time to bring the configuration back to an equilibrium. Conversely, pulling very slowly corresponds to an adiabatic situation where the chain is in its local energy minimum all the time and the $1/2$ exit probability follows by symmetry. What is surprising is that we obtain this picture with great precision, with both cases separated only by a factor of $|\ln \sigma|$. 
We do not know what happens in between the two cases specified in Theorem \ref{main}, although it is likely that the almost-sure law will start to fail before $\sigma = \eps$ due to the fluctuations of Brownian motion. The asymptotic behaviour of the system at this point or, for that matter, at any constellation with $\sigma \sqrt{|\ln \sigma|} \leq \ve \leq \sigma / \sqrt{|\ln \sigma|}$ is an interesting, but probably rather hard, open problem.
\\[1mm]
{\bf Acknowledgements:} We would like to thank Nils Berglund, Barbara Gentz and Anton Bovier for their valuable comments and stimulating discussions.

\section{The model and main result}
Three particles $x_L$, $x$ and $x_R$ in $\R$ interact with each other via a potential, $U$, of finite range satisfying
\begin{itemize}
	\item[(U0)] $U \in \caC(\R)$ with $U(-y)\isdefby U(y)$
	\item[(U1)] $U(y) = 0$ for $|y| \geq b$ and $U \in \caC^3((0,b))$
	\item[(U2)] $b < 2a$, where $U(a) = \min_{y \geq 0} U(y)$
	\item[(U3)] There exists $a_0 \in (0,a)$ such that $U''(y) \geq u_0 > 0$ for all $y \in (a_0,b)$
\end{itemize}
The particle $x_L$ is fixed at the origin and the position of $x_R$ at time $s$ is given by $x_R(s)=2a(1+\ve s)$, where $\ve > 0$ is a small parameter.  We study the behaviour of the middle particle, with position at time $s$ given by $x_s$.  Initially, it has position $x_0 = a$ so that the distance between neighbouring particles is $a$, which is the energetically-optimal configuration for this potential.  The time-dependent potential energy of the particle at position $x$ is given by
\[
H(x,\ve s) = U(x) + U(2a(1+\ve s)-x)
\]
The middle particle $x$ moves according to the SDE
\be\label{eq:sdes}
\dd x_s = -\frac{\partial H}{\partial x}(x_s,\ve s) \, \dd s + \sigma \dd W_s
\ee
where $x_0 = a$, $W_s$ is a standard Brownian motion and $\sigma>0$ is the noise intensity.  Rescaling time as $t=\ve s$, this is the same in distribution as solving
\begin{align}
\dd x_t & = -\frac{1}{\ve}\frac{\partial H}{\partial x}(x_t,t) \, \dd t + \frac{\sigma}{\sqrt{\ve}} \dd W_t \label{eq:xt}\\
& = \frac{1}{\ve}(-U'(x_t)+U'(2a(1+t)-x_t))\, \dd t + \frac{\sigma}{\sqrt{\ve}} \dd W_t \nonumber
\end{align}
This equation is well-defined as long as $2a(1+t) - b  < x_t < b$, which is the same condition that ensures the distance between any neighbouring particles is less than $b$.  As soon as this inequality fails, we consider the chain to be broken as there is no longer any interaction between $x$ and one of its neighbours.  Let
\be\label{tau0}
\tau = \inf\{t \geq 0 : x_t \notin (2a(1+t)-b,b)\}
\ee
We say the chain breaks on the left-hand side if $x_{\tau} = b$ and it breaks on the right-hand side if $x_{\tau} = 2a(1+\tau)-b$.  The chain necessarily breaks when $t = b/a-1$, so $\tau \leq b/a-1$.

Let $\Prob$ denote the law of the process $x_t$ when started from $a$ at time $0$.  We also write $f(\sigma) \ll g(\sigma)$ to mean that $f(\sigma)/g(\sigma) \to 0$ as $\sez$.
\begin{theorem}\label{main}
Let $x_t$ solve (\ref{eq:xt}) and define $\tau$ as in (\ref{tau0}).
\begin{enumerate}
 \item (Fast stretching) If $\sigma |\ln \sigma|^{1/2} \ll \ve(\sigma) \ll 1$ then $\Prob\{x_{\tau} = b\} \to 0$ as $\sez$.
 \item (Slow stretching) If
\[
 \frac{1}{\si^{2/3}}\exp\left\{-\frac{1}{\si^{2/3}}\right\} \ll \ve(\sigma) \ll \sigma|\ln \sigma|^{-1/2}
\]
then $\Prob\{x_{\tau} = b\} \to 1/2$ as $\sez$.
\end{enumerate}
\end{theorem}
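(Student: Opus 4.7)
The plan is to analyze (\ref{eq:xt}) in the co-moving coordinate $y_t := x_t - a(1+t)$, for which it becomes
\be
\dd y_t = -\frac{1}{\ve} F(y_t,t)\,\dd t \;-\; a\,\dd t \;+\; \frac{\si}{\sqrt\ve}\,\dd W_t, \qquad F(y,t):= U'(a(1+t)+y) - U'(a(1+t)-y).
\ee
The force $F$ is odd in $y$, so the \emph{only} source of left-right asymmetry is the drift $-a\,\dd t$ coming from the motion of the right-hand particle. Setting $\delta(t) := b - a(1+t) \geq 0$, a left break corresponds to $y_\tau = \delta(\tau)$ and a right break to $y_\tau = -\delta(\tau)$; the walls pinch off at $t^{\ast} := b/a - 1$. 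The drift shifts the instantaneous minimum to the \emph{lag} position $\bar y(t) := -\ve a/[2 U''(a(1+t))] + \caO(\ve^2)$, always negative and of size $\ve$. As a common preliminary I would adapt the pathwise tube estimates of Berglund--Gentz \cite{nb,nb:pathwise}: using (U3) to bound $\partial_y F(\bar y(t),t) \geq 2u_0$ uniformly, the linearization is a contracting OU process with rate $2 u_0/\ve$, and one obtains, for $\si \leq h \leq h_0$ and any $t_0 < t^{\ast}$,
\be
\Prob\Bigl\{\sup_{0 \leq t \leq t_0 \wedge \tau}|y_t - \bar y(t)| > h\Bigr\} \leq C\exp(-\kappa h^2/\si^2).
\ee

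For part~(1), in the regime $\si\sqrt{|\ln\si|}\ll \ve$, I would pick $h$ with $\si\sqrt{|\ln\si|} \ll h \ll \ve$. By the tube estimate applied up to $t_{\text{det}} - h/a$, where $t_{\text{det}} := t^{\ast} - \ve c(t^{\ast})/a$ with $c(t) := a/[2U''(a(1+t))]$, the process $y_t$ stays within distance $h$ of $\bar y(t)$ with probability $1-o(1)$. In this event, the distance from $y_t$ to the left wall is $\geq \delta(t) + \ve c(t) - h \geq \ve c(t) - h \gg h$, so the left wall is unreachable. A separate endgame analysis on the $\caO(\ve)$-interval $[t_{\text{det}}-h/a, t^{\ast}]$, where the slow-manifold linearization degenerates, completes the argument by showing that the process, still within distance $h$ of $\bar y(t)$ and hence at distance $\geq 2\ve c -h$ from the left wall, exits to the right with probability $1 - o(1)$; together this gives $\Prob\{x_\tau = b\} \to 0$.

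For part~(2), in the regime $\ve \ll \si/\sqrt{|\ln\si|}$, fix $t_1 := t^{\ast} - K\si\sqrt{|\ln\si|}/a$ with $K$ a large constant. Then $\delta(t_1) = K\si\sqrt{|\ln\si|} \gg \si$, so by the tube estimate $\tau > t_1$ with probability $1-o(1)$, and the law of $y_{t_1}$ is approximately Gaussian of scale $\si$ centred at $\bar y(t_1) = \caO(\ve) = o(\si)$, hence $\caO(\ve/\si) = o(1)$-close in total variation to its reflection about $y = 0$. On the final segment $[t_1, \tau]$ I would compare $y$ with the symmetric process $y^0$ obtained by deleting the drift $-a$. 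By Girsanov (using $\tau \leq t^{\ast}$),
\be
\ex_{y^0}\left[\left(\frac{\dd\Prob_y}{\dd\Prob_{y^0}}\right)^{\!2}\right] \leq \exp\left(\frac{a^2\ve(t^{\ast}-t_1)}{\si^2}\right) = \exp\left(\caO\left(\frac{\ve\sqrt{|\ln\si|}}{\si}\right)\right) = 1+o(1).
\ee
Cauchy--Schwarz then yields that the exit probabilities under $\Prob_y$ and $\Prob_{y^0}$ differ by $o(1)$, while under $\Prob_{y^0}$ the $y \to -y$ symmetry of the dynamics together with the approximate symmetry of the initial law at $t_1$ gives each side with probability $\tfrac12 + o(1)$.

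The main obstacle is the endgame in part~(1), where the window $(-\delta(t), \delta(t))$ pinches off and the slow-manifold linearization degenerates on an interval of length $\caO(\ve)$; the threshold $\si\sqrt{|\ln\si|}$ emerges here as the maximal OU fluctuation on the relevant macroscopic timescale. In part~(2) the delicate step is the choice of $t_1$: far enough from $t^{\ast}$ that quasi-stationarity holds and $\tau > t_1$ is typical, yet close enough that the Girsanov cost $\ve(t^{\ast}-t_1)/\si^2$ stays $o(1)$.
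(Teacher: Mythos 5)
Your part (1) is essentially the paper's argument: work in a frame following the $\caO(\ve)$-lagged quasi-static position and run a Berglund--Gentz tube estimate of half-width $h$ with $\si|\ln\si|^{1/2}\ll h\ll\ve$. But the ``endgame analysis'' you defer to is both unproven and unnecessary: since the domain pinches off at $t^{\ast}=b/a-1$ (so $\tau\leq t^{\ast}$ surely) and the distance from the lagged centre to the left-break wall is bounded below by a quantity of order $\ve$ \emph{uniformly} up to $t^{\ast}$, the tube estimate taken up to the stopped time $\tau\wedge t^{\ast}$ already forces the exit to be on the right; the paper makes this clean by replacing $U$ with a globally convex extension $\tU$ (so no degeneracy of the linearization ever arises) and by noting that the upper boundary curve $d_+(t)$ is decreasing with minimum $\approx\ve/\tU''$. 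Also, the threshold $\si|\ln\si|^{1/2}$ does not ``emerge in the endgame''; it comes from requiring the tube half-width to be simultaneously $\ll\ve$ and large enough that the sup-estimate over the whole interval (with its prefactor, see below) is $o(1)$.

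Part (2) is a genuinely different route (fixed time $t_1$, Girsanov removal of the asymmetric drift $-a\,\dd t$, symmetry of the drift-free dynamics), and the Girsanov/Cauchy--Schwarz computation with cost $\exp\{a^2\ve(t^{\ast}-t_1)/\si^2\}$ is a nice idea; but as written it has a genuine gap. Your tube estimate $C\exp(-\kappa h^2/\si^2)$ omits the prefactor of order $(t/\ve)(h^2/\si^2)$ (the number of relaxation times, cf.\ the paper's Lemma 3.4 and the Kramers-time condition $(D^2/\si^2)\e{-D^2/\si^2}\ll\ve$ in its Proposition 3.2). This is harmless in regime (1) where $\ve\geq\si$, but it is fatal for your claim that $\tau>t_1$ with probability $1-o(1)$ when $\delta(t_1)=K\si\sqrt{|\ln\si|}$: the hypothesis of part (2) allows $\ve$ as small as roughly $\si^{-2/3}\exp(-\si^{-2/3})$, and for, say, $\ve=\exp(-\si^{-1/2})$ the process has $\sim 1/\ve$ effectively independent chances to make an excursion of size $\si\sqrt{|\ln\si|}$, so it typically exits while the walls are still at distance $\sim\si\sqrt{\ln(1/\ve)}\gg\si\sqrt{|\ln\si|}$, i.e.\ \emph{before} your $t_1$. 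Your fixed-$t_1$ scheme therefore says nothing about which side the exit occurs on in a large part of the stated range; moreover the ``approximately Gaussian law at $t_1$'' claim needs an a priori bound $|y_t|\leq D$ with $D^2\ll\si$ to control the nonlinearity, and the compatibility of that bound with the Kramers-time condition is exactly where the theorem's lower bound $\si^{-2/3}\exp(-\si^{-2/3})\ll\ve$ comes from — your proposal never engages with it. The paper avoids the fixed-time issue by working with the random first passage $\tau_L$ of the Gaussian part $y_t^0$ to the mirrored lower curve, localizing it in $[t^{\ast}-\si f_+/a,\,t^{\ast}-\si/(af_+)]$ with $f_+$ tied to $\ve$ through $f_+^2\e{-f_+^2}\ll\ve$, using the \emph{exact} symmetry of $y^0$ at that first passage to get the $1/2$, and then reflection-principle estimates for the short subsequent crossing; this is also where the upper threshold $\ve\ll\si|\ln\si|^{-1/2}$ enters (one needs $f_+\gg 1$ compatible with $f_+\ll\si/\ve$). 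Your Girsanov argument could plausibly be repaired for $\ve$ only polynomially small in $\si$ (or by letting $K=K(\si)\to\infty$ and choosing $\delta(t_1)\sim\si\sqrt{\ln(1/\ve)}$, which still keeps the Girsanov cost $o(1)$), but that survival estimate must then be proved with the correct prefactor, and the full stated range would still require the $D^2\ll\si$ versus Kramers-time trade-off that produces the $\si^{-2/3}$ lower bound.
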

 \begin{itemize}
  \item The proof of this theorem will actually yield that when $\sigma |\ln \sigma|^{1/2} \ll \ve(\sigma) \ll 1$, $\Prob\{x_{\tau} = b\} < (C\ve/\si^2)\e{-c \ve^2/\si^2}$.
 \item The lower bound on $\ve$ in (2) arises because our method applies on timescales shorter than Kramers' time, although we expect the result to hold without this lower bound.
 \item If $U$ is quadratic, then (2) is true without the lower bound on $\ve$.  We will comment on this at the end.
 \item The proof can be extended to the case that the chain is stretched according to some non-linear function $p(t)$, that is, $x_R(t) = 2a(1+p(t))$ where $0 < p_0 < p'(t) < p_1$.
 \end{itemize}
The theorem shows that when the stretching is fast, the chain will almost surely break on the right-hand side as $\sez$.  This is the same behaviour as in the deterministic case when $\si = 0$ (see the following section).  However, when the stretching is sufficiently slow, there is an equal probability to break on either side, as when there is no stretching at all.

\section{Proof}
\subsection{An alternative formulation}
For times $t < \tau$, we can replace $U$ with any potential $\tU \in \caC(\R)$ such that $\tU \in \caC^3((0,\infty))$, $\tU(y) = U(y)$ for $|y| \leq b$, $\tU(-y) = \tU(y)$ and $\tU''(y) \geq u_0 > 0$ for all $|y| > a_0$.  Defining $\tH(x,t) = \tU(x) + \tU(2a(1+t)-x)$ we have that for times $t < \tau$, $H(x_t,t) = \tH(x_t,t)$ and $x_t$ also solves
\[
\dd x_t = -\frac{1}{\ve}\frac{\partial \tH}{\partial x}(x_t,t) \, \dd t + \frac{\sigma}{\sqrt{\ve}} \dd W_t
\]
Let $x_t^{\deter}$ be the solution of the deterministic equation
\be\label{eq:ode}
 \dd x_t^{\deter} = -\frac{1}{\ve}\frac{\partial \tH}{\partial x}(x_t^{\deter},t)\dd t
\ee
with $x_0^{\deter} = a$.  This ODE is well-defined as long as $0 < x_t^{\deter} < 2a(1+t)$.  Since its solution can be written
\be\label{eq:det}
 x_t^{\deter} = a(1+t)-\frac{\ve}{\tU''(a(1+t))}+\mcO(\ve^2)
\ee
we see that for small $\ve$ this condition holds, in particular, for all $t \in [0,b/a-1]$.  Furthermore, by taking $\ve$ sufficiently small we also have in this interval that $a_0 < x_t^{\deter} < 2a(1+t) - a_0$.  If we had used $H$ instead of $\tH$ in (\ref{eq:ode}), then $x_t^{\deter}$ would not have been defined on the whole interval $[0,b/a-1]$.  Indeed, there is $t < b/a-1$ such that $2a(1+t)-x_t^{\deter}=b$.

We can now define the deviation process $y_t \isdefby x_t-x_t^{\deter}$ on the interval $[0,(b/a-1 )\wedge \tau]$.   This solves, with initial condition $y_0 = 0$,
\begin{align}
 \dd y_t & = \frac{1}{\ve}[-\tU'(x_t)+\tU'(x_t^{\deter})+\tU'(2a(1+t)-x_t)-\tU'(2a(1+t)-x_t^{\deter})]\dd t+\frac{\sigma}{\sqrt{\ve}}\dd W_t \nonumber \\ 
& = \frac{1}{\ve}[A(t)y_t+B(y_t,t)]\dd t+ \frac{\sigma}{\sqrt{\ve}}\dd W_t \label{eq:y}
\end{align}
where
\[
 A(t) = -\tU''(x_t^{\deter})-\tU''(2a(1+t)-x_t^{\deter})
\]
and there is a constant $M>0$ such that $|B(y,t)| \leq My^2$ for all pairs $(y,t) \in \caD$, where $\caD$ is given in (\ref{eq:D}).  We can also find constants $A_0,A_1 > 0$ such that $-A_1 \leq A(t) \leq -A_0$ for all $t \in [0,b/a-1]$.

For the chain to be unbroken, $y_t$ must satisfy
\[
 2a(1+t)-b-x_t^{\deter} < y_t < b-x_t^{\deter}
\]
which we write as
\[
 d_{-}(t)<y_t<d_{+}(t)
\]
where
\[
 d_{+}(t) = b-a(1+t)+\frac{\ve}{\tU''(a(1+t))}+\mcO(\ve^2)
\]
and
\[
 d_{-}(t) = a(1+t)-b+\frac{\ve}{\tU''(a(1+t))}+\mcO(\ve^2)
\]
The problem is then to study the first exit of the process $y_t$ from the space-time domain, $\mcD = \mcD(\ve)$, given by
\be\label{eq:D}
 \mcD = \{(y,t):d_{-}(t)<y<d_{+}(t), 0 \leq t \leq b/a-1\}
\ee
The stopping time $\tau$ given in (\ref{tau0}) can be written
\be\label{eq:tau}
\tau = \inf\{t \geq 0 : y_t \notin \caD\}
\ee
Then $y_{\tau} = d_-(\tau)$ corresponds to $x_{\tau} = 2a(1+\tau)-b$, that is, the chain breaking on the right-hand side.
Note that $d_+(t) \geq -d_-(t)$ for all $t \in [0,b/a-1]$ and so the curve $d_{-}(t)$ crosses zero before $d_{+}(t)$.  This means that in the deterministic case, when $y_t \equiv 0$, the curve $d_{-}(t)$ is hit before $d_{+}(t)$ and the chain breaks on the right-hand side.

If we let $\Prob^{t_0,y_0}$ denote the law of the process $y_t$ when started from $y_0$ at time $t_0$, then Theorem \ref{main} can be stated as
\begin{theorem}[Alternative version of Theorem \ref{main}]Let $y_t$ solve (\ref{eq:y}) and define $\tau$ as in (\ref{eq:tau}).
\begin{enumerate}
 \item If $\sigma |\ln \sigma|^{1/2} \ll \ve(\sigma) \ll 1$ then $\Prob^{0,0}\{y_{\tau} = d_+(\tau)\} \to 0$ as $\sez$.
 \item If
\[
 \frac{1}{\si^{2/3}}\exp\left\{-\frac{1}{\si^{2/3}}\right\} \ll \ve(\sigma) \ll \sigma|\ln \sigma|^{-1/2}
\]
then $\Prob^{0,0}\{y_{\tau} = d_+(\tau)\} \to 1/2$ as $\sez$.
\end{enumerate}

\end{theorem}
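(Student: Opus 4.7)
\medskip

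\noindent\textbf{Proof plan.} The plan is to combine the sample-path concentration estimates of \cite{nb, nb:pathwise} for slowly-varying SDEs of the form \eqref{eq:y} with a dedicated analysis of the ``collapse'' of the domain $\mcD$ near the final time $t=b/a-1$, at which $d_+(t)$ and $d_-(t)$ pinch together at the common value $\ve/\tU''(b)+\mcO(\ve^2)$. Since $A(t)\leq -A_0<0$ throughout $[0,b/a-1]$, the linear restoring term in \eqref{eq:y} keeps $y_t$ concentrated around the deterministic trajectory $y\equiv 0$ at the fluctuation scale $\Theta(\sigma)$. This is to be compared with the two geometric scales of $\mcD$: the half-width $r(t):=(d_+(t)-d_-(t))/2=b-a(1+t)+\mcO(\ve^2)$ and the midpoint $m(t):=(d_+(t)+d_-(t))/2=\ve/\tU''(a(1+t))+\mcO(\ve^2)=\Theta(\ve)$. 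The two regimes of the theorem correspond roughly to $\sigma\ll\ve$ and $\sigma\gg\ve$.

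\medskip

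\noindent\textbf{Part (1), fast stretching.} First I would invoke the tube estimate of \cite{nb, nb:pathwise}: there exist $C,c>0$ such that for all $h>0$ sufficiently small,
\[
 \Prob^{0,0}\!\left\{\sup_{0\leq t\leq b/a-1}|y_t|\geq h\right\}\leq\frac{C}{\ve}\exp\!\left(-\frac{ch^2}{\sigma^2}\right).
\]
Since $d_+(t)\geq\ve/\tU''(b)+\mcO(\ve^2)$ uniformly in $t$, one can fix $\alpha>0$ independent of $\sigma,\ve$ with $d_+(t)>\alpha\ve$ throughout, so that
\[
 \Prob^{0,0}\{y_\tau=d_+(\tau)\}\leq\Prob^{0,0}\!\left\{\sup_{t\leq\tau}y_t\geq\alpha\ve\right\}\leq\frac{C}{\ve}\exp\!\left(-\frac{c\alpha^2\ve^2}{\sigma^2}\right).
\]
Under the hypothesis $\sigma|\ln\sigma|^{1/2}\ll\ve$ the exponent dominates $|\ln\sigma|$, the right-hand side tends to $0$ as $\sez$, and the resulting bound is of the quantitative form announced in the first bullet after the theorem.

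\medskip

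\noindent\textbf{Part (2), slow stretching.} Now $\sigma\gg\ve|\ln\sigma|^{1/2}$, so the fluctuation scale $\Theta(\sigma)$ of $y_t$ exceeds the offset $m(t)=\Theta(\ve)$ between the equilibrium $y=0$ and the midpoint of $[d_-(t),d_+(t)]$. A Taylor expansion using \eqref{eq:det} shows that the symmetry-breaking part of the drift is small:
\[
 B(y,t)=\tfrac12\bigl[\tU'''(2a(1+t)-x_t^{\deter})-\tU'''(x_t^{\deter})\bigr]y^2+\mcO(y^3)=\mcO(\ve)\,y^2+\mcO(y^3).
\]
I would split $[0,b/a-1]$ at a cutoff $T_0=b/a-1-\Delta$, with $\Delta=\Theta(\sigma)$ chosen so that $r(T_0)=a\Delta$ is a moderate multiple of $\sigma$. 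On $[0,T_0]$ the tube estimate guarantees that, with probability $1-o(1)$, $y_t$ has not left $\mcD$ and the law of $y_{T_0}$ is close to the Gaussian quasi-equilibrium of mean $0$ and variance $\Theta(\sigma^2)$, which is approximately symmetric under $y\mapsto-y$ because the even part of $B$ is small. On the short collapse window $[T_0,b/a-1]$, the interval $[d_-(t),d_+(t)]$ is symmetric around $m(t)$ up to $\mcO(\ve^2)$ while $y_t$ retains its approximately symmetric distribution around $0$; since $|m(t)|/\sigma=\mcO(\ve/\sigma)=o(1)$, the exit probability through $d_+(t)$ differs from $\tfrac12$ by $o(1)$.

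\medskip

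\noindent\textbf{Main obstacle.} Part (1) is essentially a one-shot application of the tube estimate. The delicate step will be Part (2), where one has to quantify the near-symmetry of the law of $y_t$ precisely enough to conclude that the combined bias from the even part of $B$ and from the $\mcO(\ve)$ shift of $\mcD$ is $o(\sigma)$. The lower bound $\ve\gg\sigma^{-2/3}\exp(-\sigma^{-2/3})$ is a technical restriction of the method, imposed so that the rescaled time horizon $1/\ve$ remains on a sub-Kramers scale over which the tube concentration controls all relevant rare events uniformly. For a strictly quadratic $U$ one has $B\equiv 0$, the law of $y_t$ is exactly Gaussian, and the argument goes through without any lower bound on $\ve$, in agreement with the third bullet after the theorem.
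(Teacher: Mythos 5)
Your Part (1) is essentially the paper's argument: concentrate $|y_t|$ below $\min_t d_+(t)\sim\ve$ (the paper does this via Proposition \ref{prop:yleqD} with $D=d_+(b/a-1)$, you via a quoted tube estimate), and conclude the exit must be through $d_-$. That is fine, up to the remark that the tube estimate for the \emph{nonlinear} process is exactly what Proposition \ref{prop:yleqD} supplies, including the Kramers-time restriction, so it should be proved rather than only cited.

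Part (2), however, has a genuine gap precisely at the step that carries the whole difficulty. You assert that because the law of $y_{T_0}$ is ``approximately symmetric'' and the collapsing interval is symmetric about $m(t)=\mcO(\ve)$ with $m(t)/\si\to 0$, the exit splits $1/2$--$1/2$ up to $o(1)$. Nothing in the proposal quantifies how an $\mcO(\ve)$ mismatch between the symmetry centre of the dynamics ($y=0$) and of the domain, together with the non-odd drift term $B$, perturbs a first-passage \emph{splitting} probability; approximate symmetry of a marginal law at one time does not transfer to exit-side probabilities for a time-dependent boundary without an argument. Your cutoff is also internally inconsistent: with $r(T_0)$ a ``moderate multiple of $\si$'' the event of exiting before $T_0$ has probability bounded away from $0$, so you must take $r(T_0)/\si\to\infty$ (cf.\ Lemma \ref{lem:low}, which needs $f^2\e{-f^2}\ll\ve$, hence $f$ at least of order $|\ln\si|^{1/2}$ in the main regime); but then on the collapse window you face the original problem again, unreduced. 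Tellingly, your heuristic would give the conclusion for all $\ve\ll\si$, whereas the theorem (and the method) stops at $\ve\ll\si|\ln\si|^{-1/2}$ — the lost logarithm lives exactly in the step you waved through, because the exit actually occurs when the boundary is at scale $\si f(\si)$ with $f$ slowly diverging, and all comparisons must hold uniformly over that window. (Your expansion $B=\mcO(\ve)y^2+\mcO(y^3)$ also uses $\tU\in\caC^4$, which is not assumed; the paper only ever uses $|B|\leq My^2$.) The paper's resolution is different and worth internalizing: sandwich $y_t$ between $y_t^0\pm D^2$ with $D$ from Proposition \ref{prop:yleqD}, so that only the \emph{exactly} sign-symmetric Gaussian process $y_t^0$ matters; introduce $\tau_L=\inf\{t:|y_t^0|\geq -d_-(t)-D^2\}$, which by exact symmetry hits the upper or lower barrier with probability $1/2$ each; localize $\tau_L$ in time (Lemmas \ref{lem:up} and \ref{lem:low}); and show by reflection-principle estimates (Proposition \ref{prop:3}) that from an upper-barrier hit the process reaches $d_+(t)+D^2$ within a short window $\Delta$ without returning below $0$. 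This converts ``approximate symmetry'' into an exact $1/2$ plus a conditional completion estimate, which is the missing idea in your proposal.
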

The main idea when proving this theorem is as follows.  The process $y_t$ is given by
\begin{align*}
 y_t & = \frac{\sigma}{\sqrt{\ve}}\int_0^t \e{\alpha(t,s)/\ve} \dd W_s + \frac{1}{\ve}\int_0^t \e{\alpha(t,s)/\ve}B(y_s,s)\dd s\\
& =: y_t^0 + y_t^1
\end{align*}
where $\alpha(t,s) = \int_s^t A(u)\dd u$ satisfies $-A_1(t-s) \leq \alpha(t,s) \leq -A_0(t-s)$.  We will also write $\alpha(t) = \alpha(t,0)$.  The term $y_t^0$ is Gaussian and so is easier to work with than $y_t^1$.  As long as $y_t$ is not too large, then $y_t^1$ can be bounded using that $|B(y,t)| \leq My^2$.  For example, if $t<\tau$ and $\sup_{0 \leq s \leq t} |y_s|\leq D$, then
\be\label{y13}
|y_t^1| \leq \frac{1}{\ve}\int_0^t |B(y_s,s)| \e{\alpha(t,s)/\ve} \dd s \leq \frac{MD^2}{\ve}\int_0^t \e{-A_0(t-s)/\ve} \dd s = \frac{MD^2}{A_0}(1-\e{-A_0 t/\ve})
\ee
If $D$ is small, then the contribution of $y_t^1$ will be much less than that of $y_t^0$.  The following proposition tells us for which $\ve$ we have this type of bound.
\begin{proposition}\label{prop:yleqD}
Let $\si \ll D(\si) \ll 1$ be such that
\[
 \frac{D^2}{\si^2}\exp\left\{-\frac{D^2}{\si^2}\right\} \ll \ve(\si) \ll 1
\]
Then
\[
 \lim_{\sez}\Prob^{0,0}\left\{\sup_{0 \leq t \leq (b/a-1 )\wedge \tau}|y_t| \geq D\right\} = 0 
\]
\end{proposition}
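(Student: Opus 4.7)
The plan is to use the decomposition $y_t = y_t^0 + y_t^1$ introduced right after the theorem statement. The process $y_t^0$ is Gaussian and admits sharp supremum estimates; the process $y_t^1$ is quadratically small whenever $y_t$ itself is small. Introduce the stopping time $\tau_D := \inf\{t \geq 0 : |y_t| \geq D\}$, so that the probability in the proposition is exactly $\Prob^{0,0}\{\tau_D \leq (b/a-1)\wedge \tau\}$.

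First I would show that the nonlinear part $y_t^1$ is negligible up to $\tau_D \wedge \tau$. On this random interval $|y_s| \leq D$, so the computation (\ref{y13}) gives $|y^1_{t \wedge \tau_D \wedge \tau}| \leq MD^2/A_0$ for every $t$. Since $D = D(\sigma) \to 0$, this is at most $D/2$ for all $\sigma$ sufficiently small. Consequently, on $\{\tau_D \leq (b/a-1)\wedge \tau\}$ one has $|y_{\tau_D}| = D$ while $|y^1_{\tau_D}| \leq D/2$, which forces $|y^0_{\tau_D}| \geq D/2$. This gives the comparison
$$
\Prob^{0,0}\{\tau_D \leq (b/a-1) \wedge \tau\} \leq \Prob\Bigl\{\sup_{0 \leq t \leq b/a-1} |y_t^0| \geq D/2\Bigr\}.
$$

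Next I would estimate the right-hand side using the pathwise supremum bound for linear SDEs from the Berglund--Gentz theory (\cite{nb, nb:pathwise}). The Gaussian process $y_t^0$ satisfies $\dd y_t^0 = (A(t)/\ve) y_t^0 \dd t + (\sigma/\sqrt{\ve})\,\dd W_t$ with $A(t) \leq -A_0 < 0$, and so $\Var(y_t^0) \leq \sigma^2/(2A_0)$ uniformly in $t$. Partitioning $[0, b/a-1]$ into subintervals of length comparable to the relaxation time $\ve/A_0$ and applying a Bernstein-type tail bound on each, one obtains
$$
\Prob\Bigl\{\sup_{0 \leq t \leq b/a-1} |y_t^0| \geq D/2 \Bigr\} \leq \frac{C}{\ve} \exp\Bigl(-\kappa \frac{D^2}{\sigma^2}\Bigr)
$$
for appropriate positive constants $C, \kappa$. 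Combined with the previous step, the hypothesis $(D^2/\sigma^2) e^{-D^2/\sigma^2} \ll \ve$, together with $\sigma \ll D$, is then exactly what is needed to make this bound vanish as $\sez$.

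The main obstacle lies in step two: producing a Gaussian supremum estimate whose exponent is strong enough to beat the $1/\ve$ prefactor under the hypothesis as stated. The Gaussian marginal tail alone gives roughly $e^{-A_0 D^2/(4\sigma^2)}$ at each time, and one must upgrade this to a pathwise bound on the supremum over a time interval long compared to the relaxation scale without losing the leading constant in the exponent. This is precisely the content of the Bernstein-type estimates in \cite{nb:pathwise}, which are invoked as a black box. The other step, controlling $y_t^1$, is essentially a direct consequence of the quadratic estimate $|B(y,t)| \leq My^2$ combined with the bound on the exponential kernel, and presents no conceptual difficulty.
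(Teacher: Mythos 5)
Your proposal is correct and follows essentially the same route as the paper: decompose $y_t = y_t^0 + y_t^1$, stop the process when $|y_t|$ reaches level $D$, use the bound (\ref{y13}) to make $|y_t^1| \leq MD^2/A_0 \ll D$ negligible, and reduce to the Berglund--Gentz Bernstein-type supremum estimate (Lemma \ref{bglem2}) for the Gaussian part, whose prefactor of order $1/\ve$ (the paper gets $\sim D^2/(\ve\si^2)$) is beaten by the exponential under the stated hypothesis. The only cosmetic difference is that the paper runs the argument with the $\xi(t)$-weighted barrier $h\sqrt{\xi(t)}$, $h = D/\sqrt{\xi_+}$, absorbing $y_t^1$ into a shift of $H$, whereas you use the constant barrier $D$ with a $D/2$ split; the two are equivalent.
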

\begin{remark}
 The lower bound on $\ve$ is related to the fact that we cannot bound $y_t$ on timescales larger than Kramers' time.  An excursion of size $D$ corresponds to climbing a potential height of $\mcO(D^2)$, which we expect to occur after a time of order $\e{D^2/\si^2}$.
\end{remark}
To prove this proposition, we will use a lemma which says roughly that the Gaussian term $y_t^0$ stays with high probability in a corridor of width proportional to its variance.  More precisely, the variance of $y_t^0$ is given by
\[
 \Var(y_t^0) = \frac{\sigma^2}{\ve}\int_0^t \e{2\alpha(t,s)/\ve}\dd s = \sigma^2 v(t)
\]
where $v(t)$ is a solution of $\ve \dot{v} = 2A(t)v+1$ with $v(0)=0$.  Following \cite{nb}, we see that since the right-hand side of this ODE vanishes for $v = -1/(2A(t))$, we can find a particular solution of the form
\be\label{eq:xi}
 \xi(t) = -\frac{1}{2A(t)}+\mcO(\ve)
\ee
where the $\mcO(\ve)$ term is uniform in $t$.  So there are constants $0 < \xi_- < \xi_+$ such that for $\ve$ sufficiently small, $\xi_- \leq \xi(t) \leq \xi_+$ for all $t \in [0,b/a-1]$.  The function $\xi(t)$ satisfies $|\xi(t)-v(t)| \leq C\e{2\alpha(t)/\ve}$ and will be used in the following lemma.  The advantage of $\xi(t)$ over $v(t)$ is that it is bounded away from zero.  The following lemma will be applied in cases where $\si \ll H$ and shows how paths of $y_t^0$ are concentrated.
\begin{lemma}[Berglund, Gentz \cite{nb,nb:pathwise}]\label{bglem2}
If $H^2 > 2\sigma^2$ then for any $t \in [0,b/a-1]$, we have
\[
 \Prob^{0,0}\left\{\sup_{0 \leq s \leq t} \frac{|y_s^0|}{\sqrt{\xi(s)}} \geq H\right\} = C_{H/\sigma}(t,\ve)\e{-H^2/2\sigma^2}
\]
with
\[
 C_{H/\sigma}(t,\ve) \leq 2\e{}\left\lceil \frac{|\alpha(t)|}{\ve}\frac{H^2}{\sigma^2}[1+\mcO(\ve)]\right\rceil
\]
\end{lemma}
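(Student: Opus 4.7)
The plan is to exploit the Gaussian structure of $y_s^0$ via a martingale representation combined with a discretization argument. First I would write
\[
y_s^0 \;=\; \frac{\sigma}{\sqrt{\ve}}\,\e{\alpha(s)/\ve}\, M_s, \qquad M_s \;:=\; \int_0^s \e{-\alpha(u)/\ve}\, \dd W_u,
\]
identifying $M_s$ as a continuous $\caF_s$-martingale with quadratic variation $\langle M\rangle_s = \int_0^s \e{-2\alpha(u)/\ve}\,\dd u$. A direct computation recovers $\Var(y_s^0)=\sigma^2 v(s)$, and since $|\xi(s)-v(s)|\le C\,\e{2\alpha(s)/\ve}$ is exponentially small in $1/\ve$, $\sqrt{\xi(s)}$ may be substituted for $\sqrt{v(s)}$ in the denominator at the cost of a $(1+\mcO(\ve))$ multiplicative correction absorbed into the prefactor.

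Next, I would introduce a uniform partition $0=t_0<t_1<\cdots<t_N=t$ of spacing $\Delta := t/N$ and, on each cell $[t_k,t_{k+1}]$, decompose
\[
y_s^0 \;=\; \e{\alpha(s,t_k)/\ve}\, y_{t_k}^0 \;+\; R_k(s), \qquad R_k(s) \;:=\; \frac{\sigma}{\sqrt{\ve}}\int_{t_k}^s \e{\alpha(s,u)/\ve}\, \dd W_u,
\]
with $R_k(s)$ independent of $\caF_{t_k}$ and Gaussian with variance at most $(\sigma^2/2A_0)(1-\e{-2A_0\Delta/\ve})$. Because $|\e{\alpha(s,t_k)/\ve}|\le 1$ and $\xi$ is bounded uniformly away from $0$, the first summand is controlled pathwise by $|y_{t_k}^0|/\sqrt{\xi(t_k)}$, a mean-zero Gaussian of variance $\sigma^2(1+\mcO(\ve))$ for which the standard tail estimate gives $\Prob\{|y_{t_k}^0|/\sqrt{\xi(t_k)}\ge H\}\le 2\,\e{-H^2/(2\sigma^2)(1+\mcO(\ve))}$. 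For the supremum of $|R_k|$ over the cell, I would apply Doob's submartingale inequality to the exponential martingale $\exp(\lambda(M_s-M_{t_k}))$ and optimize in $\lambda$; this yields a Bernstein-type bound of the form $2\exp\{-H^2/(2\sigma^2\kappa)\}$ with $\kappa = \mcO\bigl(1-\e{-2A_0\Delta/\ve}\bigr)$.

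A union bound over the $N$ cells then gives a total estimate of the shape $N\cdot\const\cdot\e{-H^2/(2\sigma^2)}$, provided $\Delta$ is chosen small enough that $\kappa<1$ so no rate is lost in the fluctuation step. Setting $\Delta$ of order $\ve\sigma^2/H^2$ balances the two contributions and makes $N = t/\Delta$ of order $(|\alpha(t)|/\ve)(H^2/\sigma^2)$, matching the stated ceiling after rounding up to an integer. The factor $2\e{}$ absorbs the symmetrisation from $y_s^0$ to $|y_s^0|$ together with the standard single-cell slack in the optimization, while the distortions between $\xi$ and $v$ and between grid and sub-interval variance produce the $[1+\mcO(\ve)]$ correction inside the ceiling. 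The main technical obstacle is precisely this balancing: the per-cell variance of $R_k$ must be pushed strictly below $\sigma^2\xi$ so that the fluctuation exponential matches the grid exponential at the full rate $H^2/(2\sigma^2)$, yet $\Delta$ cannot be made so small that $N$ overshoots the claimed ceiling. The slack needed to carry out this compromise uniformly in $t\in[0,b/a-1]$ is provided exactly by the hypothesis $H^2>2\sigma^2$.
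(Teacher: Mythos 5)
There is a genuine gap in the per-cell step. Your decomposition $y_s^0=\e{\alpha(s,t_k)/\ve}y_{t_k}^0+R_k(s)$ controls a \emph{sum}, so the event $\sup_{t_k\le s\le t_{k+1}}|y_s^0|/\sqrt{\xi(s)}\ge H$ does not imply ``grid value $\ge H$ or fluctuation $\ge H$''; you must split the threshold, i.e.\ conclude only that $|y^0_{t_k}|/\sqrt{\xi(t_k)}\ge(1-\theta)H(1+\mcO(\Delta))$ or $\sup_s|R_k(s)|\ge\theta H\sqrt{\xi_-}$ for some $\theta\in(0,1)$, and your write-up never introduces this split (bounding both pieces at the full level $H$, as you do, does not cover the event). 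Once the split is made, the balancing you propose fails quantitatively. With your choice $\Delta\asymp\ve\sigma^2/H^2$ one gets $\kappa\asymp\sigma^2/H^2$, so forcing the fluctuation exponent $\theta^2H^2/(2\sigma^2\kappa)$ up to $H^2/(2\sigma^2)$ requires $\theta\gtrsim\sigma/H$, and then the grid exponent $(1-\theta)^2H^2/(2\sigma^2)$ falls short of $H^2/(2\sigma^2)$ by roughly $H/\sigma$, i.e.\ the bound picks up a factor $\e{cH/\sigma}$, which is unbounded exactly in the regime $H\gg\sigma$ where the lemma is applied. Conversely, insisting on the full rate forces $\theta\lesssim\sigma^2/H^2$, hence $\kappa\lesssim\sigma^4/H^4$, i.e.\ $\Delta\lesssim\ve\sigma^4/H^4$ and $N\asymp(|\alpha(t)|/\ve)(H^4/\sigma^4)$, overshooting the stated ceiling by a factor $H^2/\sigma^2$. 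So your scheme cannot deliver the prefactor $2\e{}\lceil(|\alpha(t)|/\ve)(H^2/\sigma^2)[1+\mcO(\ve)]\rceil$ together with the exact exponent $\e{-H^2/2\sigma^2}$; the hypothesis $H^2>2\sigma^2$ is far too weak to absorb either loss, contrary to your closing claim.

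The argument the paper sketches (following Berglund--Gentz) avoids the increment decomposition altogether and uses \emph{one} estimate per cell: on $[u_k,u_{k+1}]$ write $|y_s^0|/\sqrt{\xi(s)}=(\sigma/\sqrt{\ve})\,(\e{\alpha(s)/\ve}/\sqrt{\xi(s)})\,|M_s|$ with $M_s=\int_0^s\e{-\alpha(u)/\ve}\dd W_u$, use that the deterministic factor is decreasing to evaluate it at $u_k$, and apply the quoted maximal inequality to the single integral $M_s$ over all of $[0,u_{k+1}]$ (from time $0$, not from $u_k$). This gives a per-cell bound $2\exp\{-(H^2/2\sigma^2)\,\xi(u_k)\e{-2\alpha(u_k)/\ve}/(v(u_{k+1})\e{-2\alpha(u_{k+1})/\ve})\}$, and the partition is chosen so that the ratio inside is at least $(1+2\sigma^2/H^2)^{-1}$; then each cell costs only the constant $\e{}$ beyond the full rate, and since $\log(\xi(s)\e{-2\alpha(s)/\ve})$ increases by $2|\alpha(t)|/\ve\,[1+\mcO(\ve)]$ over $[0,t]$, the number of cells is exactly the stated ceiling, with $H^2>2\sigma^2$ entering in this counting. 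If you want to keep your grid-plus-increment structure, you can only reach a weaker statement (prefactor of order $(|\alpha(t)|/\ve)(H^4/\sigma^4)$, or an $\e{\mcO(H/\sigma)}$ correction in the exponent); that may still be serviceable downstream because the paper's conditions carry asymptotic slack, but it is not the lemma as stated.
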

This lemma is proved by partitioning the interval $[0,t]$ and applying on each subinterval the inequality
\[
\Prob\left\{\sup_{0 \leq s \leq t} \left|\int_0^s \varphi(u)\dd W_u\right| \geq \delta \right\} \leq 2\exp\left\{-\frac{\delta^2}{2\int_0^t \varphi(u)^2 \dd u}\right\}
\]
which is valid for deterministic Borel-measurable functions $\varphi: [0,t] \to \R$.
\begin{proof}[Proof of Proposition \ref{prop:yleqD}]
Consider the stopping time given by
\[
 \tau(h) = \inf\{t \geq 0: |y_t| \geq h\sqrt{\xi(t)}\}
\]
where $\xi(t)$ is given in (\ref{eq:xi}).  If we can find $h$ such that $h \sqrt{\xi(t)} \leq D$ for all $t \in [0,b/a-1]$ then
\begin{align*}
 \Prob^{0,0}\left\{\sup_{0 \leq t \leq (b/a-1 )\wedge \tau}|y_t| \geq D\right\} & \leq \Prob^{0,0}\left\{\tau(h)<(b/a-1 )\wedge \tau \right\}\\
 & = \Prob^{0,0}\left\{\sup_{0 \leq t \leq (b/a-1 )\wedge \tau \wedge \tau(h)} \frac{|y_t|}{\sqrt{\xi(t)}} \geq h\right\}
\end{align*}
As was noted above, $\xi(t) \leq \xi_+$ and so choosing $h = D/\sqrt{\xi_+}$ gives $h \sqrt{\xi(t)} \leq D$.  For $0 \leq t \leq (b/a-1 )\wedge \tau \wedge \tau(h)$, we have $|y_t| \leq D$ and so $|y_t^1| \leq M D^2 /A_0$.  Therefore,
\[
\sup_{0 \leq t \leq (b/a-1 )\wedge \tau \wedge \tau(h)} \frac{|y_t|}{\sqrt{\xi(t)}} \leq \sup_{0 \leq t \leq b/a-1} \frac{|y_t^0|}{\sqrt{\xi(t)}} + \frac{M D^2}{A_0 \sqrt{\xi_-}}
\]
and we have
\begin{multline*}
\Prob^{0,0}\left\{\sup_{0 \leq t \leq (b/a-1 )\wedge \tau \wedge \tau(h)} \frac{|y_t|}{\sqrt{\xi(t)}} \geq \frac{D}{\sqrt{\xi_+}}\right\} \\ \leq \Prob^{0,0}\left\{\sup_{0 \leq t \leq b/a-1} \frac{|y_t^0|}{\sqrt{\xi(t)}} \geq D\left(\frac{1}{\sqrt{\xi_+}} - \frac{M D}{A_0 \sqrt{\xi_-}}\right)\right\}
\end{multline*}
We can apply Lemma \ref{bglem2} with $H = D(1/\sqrt{\xi_+} - MD /(A_0 \sqrt{\xi_-})) = D(1/\sqrt{\xi_+}+\mcO(D))$.
\begin{multline*}
\Prob^{0,0}\left\{\sup_{0 \leq t \leq b/a-1} \frac{|y_t^0|}{\sqrt{\xi(t)}} \geq D\left(\frac{1}{\sqrt{\xi_+}} - \frac{M D}{A_0 \sqrt{\xi_-}}\right)\right\} \\  \leq 2\e{}\left\lceil C_1 \frac{D^2}{\ve \si^2}(1+ \mcO(D+\ve))\right\rceil \exp\left\{-C_2 \frac{D^2}{\si^2}(1+\mcO(D))\right\}
\end{multline*}
for constants $C_1,C_2 > 0$, from which the result follows.
\end{proof}

\subsection{Fast stretching}
In this case, we show that the chain is stretched so fast that the process $y_t$ is almost surely never greater than $d_{+}(b/a-1)$ in absolute value.  Note that the curve $d_{+}(t)$ is decreasing, since its derivative is given by 
\[
-\dd x^{\deter}_t/\dd t = \tU'(x_t^{\deter})-\tU'(2a(1+t)-x_t^{\deter}) < 0
\]
using that $\tU''(y) \geq u_0 > 0$ for $|y|>a_0$ and $a_0 < x_t^{\deter} < 2a(1+t) - x_t^{\deter}$ for $t>0$, which can be seen from (\ref{eq:det}).  Since the curve $d_{+}(t)$ is decreasing, this means that it cannot have ever been hit by the process $y_t$ and so the chain must have broken on the right-hand side (Fig \ref{fig:corr}).  This is contained in the following proposition.
\begin{figure}
    \centering
    \includegraphics[width=.8\textwidth]{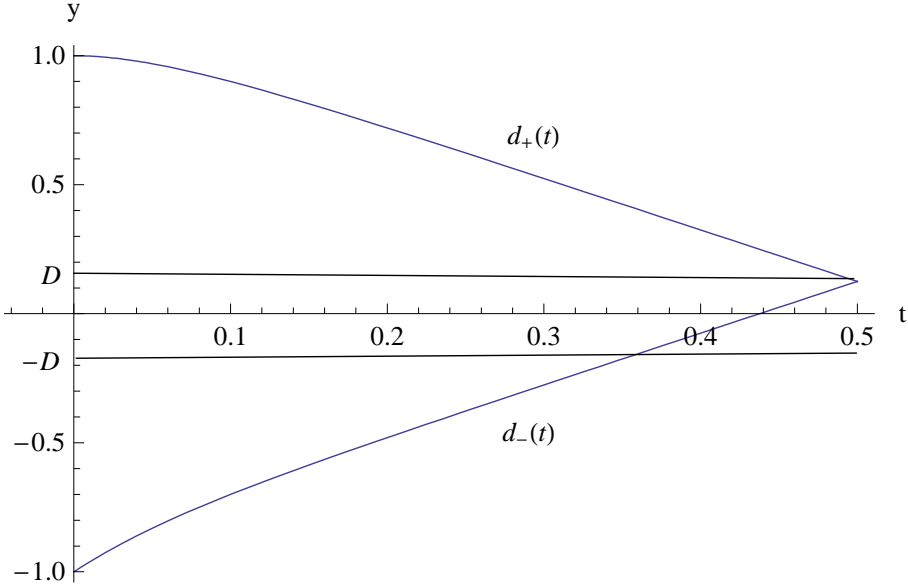}
\caption{When the chain is pulled sufficiently fast, the process $y_t$ is unlikely to leave the corridor of width $D = d_{+}(b/a-1)$, so must hit $d_{-}(t)$ first (shown here for $U(y) = y^2-4y+3$ and $\ve = 0.25$).}
\label{fig:corr}
\end{figure}
\begin{proposition}\label{prop:fast}
Let $\sigma |\ln \sigma|^{1/2} \ll \ve(\si) \ll 1$.  Then
\[
 \lim_{\sez}\Prob^{0,0}\left\{\sup_{0 \leq t \leq (b/a-1 )\wedge \tau}|y_t| \geq d_{+}(b/a-1)\right\} = 0 
\]
\end{proposition}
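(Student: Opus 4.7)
The plan is to apply Proposition \ref{prop:yleqD} with the choice $D(\sigma) := d_+(b/a-1)$, so that the entire argument reduces to verifying the hypotheses of that proposition in the regime $\sigma|\ln\sigma|^{1/2} \ll \ve(\sigma) \ll 1$.

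First I would substitute $t = b/a - 1$ into the expansion
\[
 d_+(t) = b - a(1+t) + \frac{\ve}{\tU''(a(1+t))} + \mcO(\ve^2),
\]
obtaining $D = \ve/\tU''(b) + \mcO(\ve^2)$. Hence $D$ is comparable to $\ve$: the upper bound $D \ll 1$ is immediate from $\ve \ll 1$, while $D \gg \si$ follows from $\ve \gg \si|\ln\si|^{1/2} \gg \si$. This disposes of the first hypothesis $\si \ll D \ll 1$ of Proposition \ref{prop:yleqD}.

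The only step requiring any thought is the second hypothesis
\[
 \frac{D^2}{\si^2}\exp\!\left\{-\frac{D^2}{\si^2}\right\} \ll \ve(\si).
\]
Since $D^2/\si^2$ is of order $\ve^2/\si^2$, and the assumption $\ve \gg \si|\ln\si|^{1/2}$ forces $\ve^2/\si^2 \gg |\ln\si|$, for any fixed $K > 0$ and all sufficiently small $\si$ one has $\exp(-D^2/\si^2) \leq \si^K$. Taking $K$ large enough to kill the polynomial prefactor $\ve^2/\si^2 \leq \si^{-2}$, the left-hand side becomes smaller than, say, $\si^{K-2}$, which is much smaller than $\ve$ because $\ve \gg \si$. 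This is precisely the point where the threshold $\si|\ln\si|^{1/2}$ enters, and with both hypotheses verified, Proposition \ref{prop:yleqD} yields exactly the claim of Proposition \ref{prop:fast}.

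There is no genuine obstacle here: Proposition \ref{prop:fast} is a direct specialisation of Proposition \ref{prop:yleqD} to the maximal admissible $D$. Its significance lies in what the bound then implies about breaking: since $d_+$ is decreasing (as shown in the discussion preceding the proposition), confinement of $|y_t|$ below the terminal value $d_+(b/a-1) \leq d_+(t)$ forces $y_t$ to remain strictly below the upper boundary $d_+(t)$ throughout $[0,\tau]$, so the exit from $\caD$ must occur through the lower boundary $d_-(t)$; that is, the chain almost surely breaks on the right, as depicted in Figure \ref{fig:corr}.
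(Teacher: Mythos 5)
Your proposal is correct and is exactly the paper's proof: the paper's argument is the one-line application of Proposition \ref{prop:yleqD} with $D = d_+(b/a-1) = \mcO(\ve)$, and your verification that $\si \ll D \ll 1$ and $(D^2/\si^2)\e{-D^2/\si^2} \ll \ve$ in the regime $\ve \gg \si|\ln\si|^{1/2}$ simply spells out the hypotheses the paper leaves implicit. Your closing remark on why confinement below the decreasing curve $d_+(t)$ forces a break on the right matches the discussion the paper gives just before the proposition.
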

\begin{proof}
Apply Proposition \ref{prop:yleqD} with $D = d_+(b/a-1) = \ve/\tU''(b/a-1)+\mcO(\ve^2)$.
\end{proof}
\subsection{Slow stretching}
The strategy is as follows.  Suppose we are given $D$ such that
\[
\lim_{\sez}\Prob^{0,0}\left\{\sup_{0 \leq t \leq (b/a-1) \wedge \tau} |y_t|\geq D\right\} = 0
\]
Then we can assume that $|y_t^1| \leq MD^2/A_0$ for all $t<\tau$, since all other cases have zero probability in the limit.  To simplify notation, we will write this last inequality as $|y_t^1| \leq D^2$.  For all $t < \tau$ we have
\[
 y_t^0 - D^2 \leq y_t \leq y_t^0 + D^2
\]
and, therefore,
\[
 P_L \leq \Prob^{0,0}\{y_{\tau} = d_+(\tau)\} \leq P_U
\]
where
\[
 P_L = P_L(D) = \Prob^{0,0}\{y_t^0 - D^2 \text{ hits $d_+(t)$ before $d_-(t)$}\}
\]
and
\[
 P_U = P_U(D) = \Prob^{0,0}\{y_t^0 + D^2 \text{ hits $d_+(t)$ before $d_-(t)$}\}
\]
The aim of this section is to show that given $\ve(\si)$, we can pick $D(\si)$ such that $P_L$ and $P_U$ tend to $1/2$, which gives the result.  The proof of each limit is similar, so we will show the details for $P_L$ only.  Note that $P_L$ can be written 
\[
 P_L = \Prob^{0,0}\{y_t^0 \text{ hits $d_+(t) + D^2$ before $d_-(t)+D^2$}\}
\]
Define the stopping time
\[
 \tau_L = \tau_L(D) = \inf\{t \geq 0 : |y_t^0| \geq -d_-(t)-D^2\} < T
\]
where $T = \inf\{t \geq 0: -d_-(t)-D^2 = 0\}$.  By symmetry,
\[
 \Prob^{0,0}\{y_{\tau_L}^0 = -d_-(t)-D^2\} = \Prob^{0,0}\{y_{\tau_L}^0 = d_-(t)+D^2\} = \frac{1}{2}
\]
We must show that if $y_{\tau_L}^0 = -d_-(\tau_L)-D^2$ then almost surely $y_t^0$ hits $d_+(t)+D^2$ soon after as $\sez$ (see Fig \ref{boundary}).

\begin{figure}
  \centering
    \includegraphics[width=.8\textwidth]{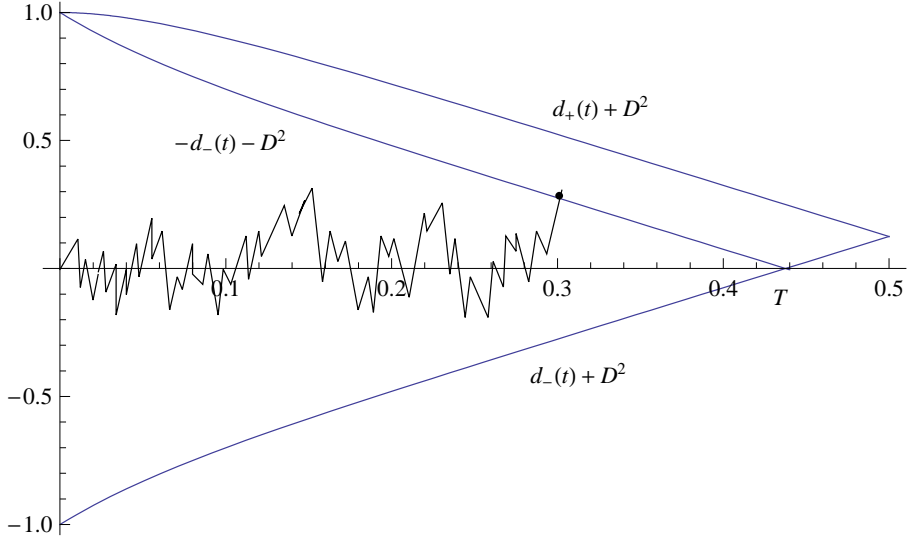}
  \caption{When the chain is stretched slowly, we show that conditional probability of hitting the curve $d_+(t)+D^2$ before $d_{-}(t)+D^2$, when started from $-d_{-}(t)-D^2$, goes to one as $\sez$.}
\label{boundary}
\end{figure}
In the following two lemmas, we establish upper and lower bounds for $\tau_L$.  The upper bound is needed in order that $y_t^0$, when started from $-d_-(t)-D^2$, is much closer to $d_+(t)+D^2$ than to $d_-(t)+D^2$.  If $\tau_L$ is too close to $T$ then this is not the case and $y_t^0$ is more likely to exit in the ``wrong direction''.  The lower bound is required since we cannot expect the conditional probability of hitting $d_+(t)+D^2$, when started from $-d_-(t)-D^2$, to be close to one if it is unlikely that $y_t^0$ has even reached $-d_-(t)-D^2$.

We will now write $\Prob^{t_0,y_0}$ to denote the law of $y_t^0$ when started from $y_0$ at time $t_0$.
\begin{lemma}\label{lem:up}
For any $f(\si) \ll 1$ and $D^2 \ll \si$,
\[
\lim_{\sez} \Prob^{0,0}\left\{\tau_L \leq \frac{b}{a} - 1 -\frac{\sigma f(\sigma)}{a}\right\} = 1
\]
\end{lemma}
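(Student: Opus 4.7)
The plan is to reduce the claim to a single-time Gaussian anti-concentration estimate at $t^* \isdefby b/a - 1 - \sigma f(\sigma)/a$. First, substituting $a(1+t^*) = b - \sigma f(\sigma)$ into the expansion of $d_-$ and using $\sigma f(\sigma) \to 0$ together with continuity of $\tU''$ at $b$, I would obtain
\[
-d_-(t^*) - D^2 = \sigma f(\sigma) - \frac{\ve}{\tU''(b)} - D^2 + \mcO(\ve^2).
\]
I would then split into two cases according to the sign of this threshold. If $-d_-(t^*) - D^2 \leq 0$, then since $t \mapsto -d_-(t) - D^2$ is decreasing and vanishes at $T$, we would have $t^* \geq T$, so that $\tau_L \leq T \leq t^*$ holds deterministically and there is nothing to prove. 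Otherwise, the hypotheses $\ve \ll \sigma$ (from slow stretching) and $D^2 \ll \sigma$ force $\sigma f(\sigma)$ to dominate the $\ve$ and $D^2$ corrections, so the threshold is of exact order $\sigma f(\sigma)$.

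The key step is the deterministic inclusion
\[
\{\tau_L > t^*\} \;\subseteq\; \{|y_{t^*}^0| < -d_-(t^*) - D^2\},
\]
which collapses the whole first-exit problem to a one-point estimate. Since $y_{t^*}^0$ is centered Gaussian with variance $\sigma^2 v(t^*)$, and $v(t^*) = \xi(t^*) + \mcO(\e{2\alpha(t^*)/\ve})$ with $\xi_- \leq \xi(t^*) \leq \xi_+$ and $t^* \to b/a - 1$ bounded away from $0$ (so the exponential correction is negligible), the standard deviation of $y_{t^*}^0$ is of exact order $\sigma$, uniformly as $\sez$. The elementary anti-concentration bound $\Prob\{|Z| < \delta\} \leq \delta\sqrt{2/\pi}$ for a standard normal $Z$ then yields
\[
\Prob^{0,0}\{|y_{t^*}^0| < -d_-(t^*) - D^2\} = \mcO(f(\sigma)) + \mcO(\ve/\sigma) + \mcO(D^2/\sigma) = o(1),
\]
completing the proof.

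There is no serious obstacle here; the argument is essentially bookkeeping that combines the case split by the sign of the threshold with a standard Gaussian anti-concentration estimate. The intuitive reason the lemma is easy is that in the slow stretching regime, the boundary $-d_-(t)$ descends to zero on a length scale of order $\ve + D^2 \ll \sigma$, while $y_t^0$ has typical fluctuations of order $\sigma$, so it comfortably breaches any barrier of size $\sigma f(\sigma) \ll \sigma$ in ample time.
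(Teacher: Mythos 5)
Your proposal is correct and is essentially the paper's own argument: a fixed-time Gaussian estimate at $t^* = b/a - 1 - \sigma f(\sigma)/a$ (with the trivial case $t^* \geq T$ split off), using that the threshold $-d_-(t^*) - D^2 = \mcO(\sigma f(\sigma) + \ve + D^2)$ while the standard deviation of $y_{t^*}^0$ is of order $\sigma$, so the ratio tends to zero; the paper phrases this via the Gaussian tail integral tending to $1$ rather than an anti-concentration bound, and like you it implicitly uses $\ve \ll \sigma$ from the slow-stretching regime.
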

\begin{proof}
To prove this upper bound for $\tau_L$, we use a simple fixed-time estimate.  Let $t = b/a-1 - \sigma f(\sigma)/a$.  If $f$ is such that $t \geq T$, then the upper bound is trivial.  Otherwise
\begin{align*}
\Prob^{0,0}\{\tau_L \leq t\} & \geq \Prob^{0,0}\left\{|y_t^0|\geq -d_-(t)-D^2\right\}\\
& = \frac{2}{\sqrt{2\pi \Var(y_t^0)}}\int_{-d_-(t)-D^2}^{\infty}\exp\left\{-\frac{x^2}{2\Var(y_t^0)}\right\}\dd x\\
& = \frac{2}{\sqrt{\pi}}\int_{(-d_-(t)-D^2)/\sqrt{2\Var(y_t^0)}}^{\infty}e^{-z^2}\dd z
\end{align*}
where 
\[
\frac{-d_-(t)-D^2}{\sqrt{2\Var(y_t^0)}} \leq \frac{ f(\si)-D^2/\si + \mcO(\ve/\si)}{\sqrt{1/A_1+\mcO(\ve^n)}}, \text{ $n \geq 1$ }
\]
This last inequality follows since $\Var(y_t^0) \geq \si^2 (1-\e{2A_1 t/\ve})/A_1$.  The right-hand side goes to zero as $\sez$.
\end{proof}
\begin{lemma}\label{lem:low}
For any $f$ such that  $1 \ll f(\si) \ll 1/\si$ and
\begin{equation}\label{f2}
 f(\si)^2 \exp\{-f(\si)^2\} \ll \ve \ll 1
\end{equation}
and for any $D^2 \ll \si$, we have
\[
 \lim_{\sez}\Prob^{0,0}\left\{\tau_L \geq \frac{b}{a}-1-\frac{\sigma f(\sigma)}{a}\right\} = 1
\]
\end{lemma}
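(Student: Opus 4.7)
The plan is to show that, at $t = b/a - 1 - \sigma f(\sigma)/a$, the Gaussian process $y^0$ has with high probability never exceeded the shrinking envelope $-d_-(s) - D^2$ on $[0,t]$. This is a uniform-in-time complement to Lemma \ref{lem:up}, and it will be handled via Lemma \ref{bglem2}.

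The first step is to evaluate the envelope at the critical time. Substituting $a(1+t) = b - \sigma f(\sigma)$ into the expansion of $d_-$ yields $-d_-(t) - D^2 = \sigma f(\sigma)(1 + o(1))$, where the $o(1)$ absorbs both $\mathcal{O}(\ve)/(\sigma f)$ and $D^2/(\sigma f)$. Both vanish under the hypotheses: the condition $f^2\exp(-f^2) \ll \ve$ together with $f \gg 1$ forces $\ve \ll \sigma f$ in the regime of interest, while $D^2 \ll \sigma$ gives $D^2 \ll \sigma f$. In particular $t < T$, so $\tau_L$ is meaningful on $[0,t]$.

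The second step is the key inclusion. Since $d_-$ is increasing, $-d_-(s) - D^2$ is decreasing in $s$; combined with $\xi(s) \leq \xi_+$, the choice $H = (-d_-(t) - D^2)/\sqrt{\xi_+}$ guarantees $H\sqrt{\xi(s)} \leq -d_-(t) - D^2 \leq -d_-(s) - D^2$ for every $s \in [0,t]$, whence
\[
\{\tau_L \leq t\} \;\subseteq\; \Big\{\sup_{0 \leq s \leq t} |y_s^0|/\sqrt{\xi(s)} \geq H\Big\}.
\]
Lemma \ref{bglem2} now applies: $H/\sigma \sim f(\sigma)/\sqrt{\xi_+} \to \infty$, so $H^2 > 2\sigma^2$ is automatic, and since $|\alpha(t)| = \mathcal{O}(1)$ the bound takes the form $C_1 (f^2/\ve)\exp(-C_2 f^2)$ for positive constants $C_1, C_2$. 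Under $f^2\exp(-f^2) \ll \ve$ this tends to zero, giving the claim.

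The main difficulty is the bookkeeping of constants: the exponent produced by Lemma \ref{bglem2} is $-H^2/(2\sigma^2) = -f^2/(2\xi_+)$, so $C_2 = 1/(2\xi_+)$ may be less than one if $\xi_+ > 1/2$, which would make the stated hypothesis insufficient by a multiplicative factor in the exponent. This is an artifact of the crude estimate $\xi(s) \leq \xi_+$; it is resolved either by sharpening $H$ using $\xi(s)$ pointwise, or, more simply, by observing that $f$ can be replaced by $\sqrt{2\xi_+}\, f$ throughout without violating any of the assumptions on $f$, which absorbs the constant.
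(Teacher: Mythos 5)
Your argument is essentially the paper's own proof: bound $-d_-(s)-D^2$ from below on the whole interval $[0,t_1]$ by $\sigma f(\sigma)-c\ve-D^2=\sigma f(\sigma)(1+o(1))$, choose $H$ as this quantity divided by $\sqrt{\xi_+}$ so that $\{\tau_L<t_1\}\subseteq\{\sup_{0\leq s\leq t_1}|y^0_s|/\sqrt{\xi(s)}\geq H\}$, and conclude by Lemma \ref{bglem2} together with condition (\ref{f2}). Your closing remark about the factor $1/(2\xi_+)$ in the exponent is a fair observation rather than a flaw in your proof: the paper silently absorbs this constant when invoking Lemma \ref{bglem2}, and, as you note, it is harmless since $f$ may be rescaled by a fixed constant without affecting the hypotheses or the application in the main theorem.
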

\begin{proof}
We will use Lemma \ref{bglem2}.  First note that there is $c_1>0$ such that for $\ve(\si)$ sufficiently small,
\[
 -d_-(t) \geq b-a(1+t)-c_1 \ve
\]
holds for all $t$.  Putting $t_1 = b/a-1-\si f(\si)/a$, we get
\[
 \inf_{0 \leq t \leq t_1}(-d_-(t)) \geq \si f(\si)-c_1 \ve
\]
Now put $H= (\si f(\si)-c_1 \ve -D^2)/\sqrt{\xi_+}>0$.  Then
\[
 H\sqrt{\xi(t)} \leq H\sqrt{\xi_+} = \si f(\si)-c_1 \ve -D^2 \leq \inf_{0 \leq t \leq t_1}(-d_-(t)-D^2)
\]
Therefore,
\begin{align*}
 \Prob^{0,0}\left\{\tau_L < t_1 \right\} & \leq \Prob^{0,0}\left\{\sup_{0 \leq t \leq t_1} \frac{|y_t^0|}{\sqrt{\xi(t)}} \geq \inf_{0 \leq t \leq t_1} \frac{-d_-(t)-D^2}{\sqrt{\xi(t)}}\right\}\\
& \leq \Prob^{0,0}\left\{\sup_{0 \leq t \leq t_1} \frac{|y_t^0|}{\sqrt{\xi(t)}} \geq H\right\}
\end{align*}
Now we apply Lemma \ref{bglem2} to show that the right-hand side of this inequality tends to $0$ as $\sez$, which gives the result.
\end{proof}
Suppose that $f_+$ satisfies $1 \ll f_+(\si) \ll \min(\si/D^2,\si/\ve)$.  Then we can apply Lemmas \ref{lem:up} and \ref{lem:low} to $1/f_+(\si)$ and $f_+(\si)$, respectively.  This tells us that
\be\label{eq:t^*}
 \lim_{\sez}\Prob^{0,0}\left\{\frac{b}{a}-1-\frac{\si f_+(\si)}{a} \leq \tau_L \leq \frac{b}{a}-1-\frac{\si }{a f_+(\si)}\right\} = 1
\ee
The next proposition has three parts.  Together, they show that if $y_t^0$ starts from $-d_-(t)-D^2$ for suitable times $t^*$ as given in (\ref{eq:t^*}), then it hits $d_+(t)+D^2$ in a small interval $[t^*,t^*+\Delta]$ afterwards and does not hit the lower curve $d_-(t)+D^2$ in this time.

Recall that $T = \inf\{t \geq 0: -d_-(t)-D^2=0\}$.
\begin{proposition}\label{prop:3}
Let $f_+(\si)$ and $\Delta(\si)$ be chosen so that
\be\label{eq:f+}
 1 \ll f_+(\si) \ll \sqrt{\ve/\Delta} \ll \min(\si/D^2,\si/\ve)
\ee
Then for every $f$ such that $1/f_+(\si) \leq f(\sigma) \leq f_+(\si)$ and $t^* \isdefby b/a-1-\sigma f(\sigma)/a$, we have
\begin{enumerate}
 \item $[t^*,t^* + \Delta] \subset [0,T]$ for $\si$ sufficiently small.\\
 \item $\displaystyle \lim_{\sez}\Prob^{t^*,-d_-(t^*)-D^2}\left\{y_t^0 \geq d_+(t) + D^2 \text{ for some $t \in [t^*,t^*+\Delta]$}\right\} = 1$
 \item $\displaystyle \lim_{\sez}\Prob^{t^*,-d_-(t^*)-D^2}\left\{\inf_{t^* \leq t \leq t^* + \Delta} y_t^0 < 0 \right\} = 0$
\end{enumerate}
\end{proposition}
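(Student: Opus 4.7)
First I verify Part~1, which is arithmetic. Solving $-d_-(T) - D^2 = 0$ yields $T = b/a - 1 - D^2/a - O(\varepsilon)$, so $T - t^* = (\sigma f - D^2)/a - O(\varepsilon) \geq \sigma/(2af_+)$ for small $\sigma$, since both $D^2 f_+$ and $\varepsilon f_+$ are $o(\sigma)$ by the hypotheses $f_+ \ll \sigma/D^2$ and $f_+ \ll \sigma/\varepsilon$; and $\Delta \ll \varepsilon/f_+^2 \ll \sigma/(af_+)$, so $t^* + \Delta < T$ for small $\sigma$. For Parts~2 and~3 the common setup is to start from the variation-of-constants formula for $y^0_{t^*+s}$, then exploit $\Delta/\varepsilon \ll 1/f_+^2 \ll 1$ to expand $e^{\alpha(t^*+s,t^*)/\varepsilon} = 1 - \kappa s/\varepsilon + O((\Delta/\varepsilon)^2)$ (with $\kappa := -A(t^*) \in [A_0, A_1]$) and to approximate the driving stochastic integral by $(\sigma/\sqrt{\varepsilon})(W_{t^*+s} - W_{t^*})$ up to a relative error of $O(\Delta/\varepsilon)$. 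Together with the expansions $y^0_{t^*} = \sigma f - D^2 + O(\varepsilon)$ and $d_+(t^*+s) + D^2 = \sigma f + D^2 - as + O(\varepsilon)$, this gives the representation
\[
y^0_{t^*+s} - \bigl(d_+(t^*+s) + D^2\bigr) = -2D^2 - \mu s + \tfrac{\sigma}{\sqrt{\varepsilon}}\,B_s + \rho(s),
\]
where $B_s := W_{t^*+s} - W_{t^*}$, $\mu := \kappa \sigma f/\varepsilon - a > 0$, and $\sup_{s \leq \Delta} |\rho(s)|$ is of strictly smaller order than $\sigma\sqrt{\Delta/\varepsilon}$ with probability tending to $1$.

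For Part~2 the task reduces to $\Prob\bigl(\sup_{s \in [0,\Delta]}((\sigma/\sqrt{\varepsilon}) B_s - \mu s) \geq 2D^2(1 + o(1))\bigr) \to 1$, which I would establish by invoking the reflection formula
\[
\Prob\Bigl(\sup_{s \in [0,T]}(B_s - \nu s) \geq c\Bigr) = 1 - \Phi\!\Bigl(\tfrac{c + \nu T}{\sqrt{T}}\Bigr) + e^{-2\nu c}\left(1 - \Phi\!\Bigl(\tfrac{c - \nu T}{\sqrt{T}}\Bigr)\right) \qquad (\nu,c > 0),
\]
applied with $\nu := \mu\sqrt{\varepsilon}/\sigma$, $c := 2D^2\sqrt{\varepsilon}/\sigma$ and $T := \Delta$. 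The hypotheses~(\ref{eq:f+}) are tailored so that the three dimensionless ratios $c/\sqrt{T}$, $\nu\sqrt{T}$ and $\nu c$ all vanish simultaneously: $c/\sqrt{\Delta} = (2D^2/\sigma)\sqrt{\varepsilon/\Delta} \to 0$ because $\sqrt{\varepsilon/\Delta} \ll \sigma/D^2$; $\nu\sqrt{\Delta} \approx \kappa f \sqrt{\Delta/\varepsilon} \to 0$ because $f \leq f_+ \ll \sqrt{\varepsilon/\Delta}$; and $\nu c \approx 2\kappa f D^2/\sigma \to 0$ because $f_+ D^2 \ll \sigma$. Consequently each $1 - \Phi$ term tends to $1/2$ and the exponential to $1$, giving total probability $\to 1$.

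For Part~3 I would factor $I_s := (\sigma/\sqrt{\varepsilon})\int_{t^*}^{t^*+s} e^{\alpha(t^*+s,u)/\varepsilon}\,dW_u = e^{\alpha(t^*+s, t^*)/\varepsilon} N_s$, where $N_s := (\sigma/\sqrt{\varepsilon}) \int_{t^*}^{t^*+s} e^{-\alpha(u,t^*)/\varepsilon}\, dW_u$ is a continuous Gaussian martingale with deterministic quadratic variation $\langle N \rangle_\Delta \leq C \sigma^2 \Delta/\varepsilon$. Since $e^{\alpha/\varepsilon} \geq 1/2$ on $[t^*, t^*+\Delta]$ for small $\sigma$, the inclusion $\{y^0_{t^*+s} < 0\} \subset \{|N_s| > y^0_{t^*}/2\}$ holds, and Dambis--Dubins--Schwarz gives $\sup_{s \leq \Delta}|N_s| \stackrel{d}{=} |Z| \sqrt{\langle N \rangle_\Delta}$ for a standard normal $Z$. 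Consequently the probability in Part~3 is bounded by $4\bigl(1 - \Phi\bigl(C f \sqrt{\varepsilon/\Delta}\bigr)\bigr)$, which vanishes because $f\sqrt{\varepsilon/\Delta} \geq \sqrt{\varepsilon/\Delta}/f_+ \to \infty$. The core difficulty is Part~2: the single-time marginal $\Prob(y^0_{t^*+\Delta} \geq d_+(t^*+\Delta) + D^2)$ only converges to $1/2$, so the full supremum estimate---delivered by the reflection formula only under the precise coupling of scales encoded in~(\ref{eq:f+})---is essential.
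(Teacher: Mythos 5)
Your proposal is correct in substance and, reassuringly, its three key limits are exactly the smallness conditions that (\ref{eq:f+}) encodes, but it takes a genuinely different route from the paper, so a comparison is worth recording. Part (1) is the same arithmetic in both. For part (2), the paper never linearizes: it passes to the Gaussian martingale $z_t^0=\e{-\alpha(t)/\ve}y_t^0$, dominates the moving barrier $\e{-\alpha(t)/\ve}(d_+(t)+D^2)$ on $[t^*,t^*+\Delta]$ by the constant level $h(t^*,\Delta)$, applies the exact reflection principle to $z^0$, and checks that $h/\sqrt{2\Var(z^0_{t^*+\Delta})}\to0$ --- precisely your three ratios $c/\sqrt{T}$, $\nu\sqrt{T}$, $\nu c\to0$ in disguise. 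Your route instead approximates $y^0$ on $[t^*,t^*+\Delta]$ by a Brownian motion with small drift and quotes the two-sided crossing formula; this works, but it obliges you to control the remainder $\rho$ uniformly in $s$ (a maximal inequality for the martingale $(\sigma/\sqrt{\ve})\int_{t^*}^{t^*+s}(\e{-\alpha(u,t^*)/\ve}-1)\,\dd W_u$, whose quadratic variation is $O(\sigma^2\Delta^3/\ve^3)$, supplies this, though you only assert it), and your reduction threshold should read $2D^2+o(\sigma\sqrt{\Delta/\ve})$ rather than $2D^2(1+o(1))$, since $\rho$ is small only relative to the noise scale, not relative to $D^2$; the verification that the rescaled threshold vanishes is unchanged, so the conclusion survives. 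For part (3), the paper invokes a reflection principle for the conditioned process $y^0$ about $y=0$ (justified by symmetry, citing the appendix of Berglund--Gentz) plus a single Gaussian tail bound; your factorization $y^0_{t^*+s}=\e{\alpha(t^*+s,t^*)/\ve}\bigl(y^0_{t^*}+N_s\bigr)$ with a maximal estimate for the Gaussian martingale $N_s$ is a more self-contained substitute, except that your appeal to Dambis--Dubins--Schwarz is misstated: $\sup_{s\le\Delta}|N_s|$ is \emph{not} distributed as $|Z|\sqrt{\langle N\rangle_\Delta}$ (only the one-sided supremum is half-normal); what you need, and what your final bound actually uses, is $\Prob\bigl\{\sup_{s\le\Delta}|N_s|\ge x\bigr\}\le 4\bigl(1-\Phi\bigl(x/\sqrt{\langle N\rangle_\Delta}\bigr)\bigr)$, which follows from a union bound and reflection. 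With these two small repairs your argument is complete: the paper's exponential change of variables buys exact identities and no Taylor bookkeeping, while your version buys explicit Brownian functionals and avoids appealing to a reflection principle for the non-martingale process $y^0$.
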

\begin{remark}
 Note that (1) and (3) together guarantee that $y_t^0$ does not hit $d_-(t)+D^2$ in the interval $[t^*,t^*+\Delta]$.
\end{remark}
\begin{proof}
(1) The upper bound on $f_+$ implies, in particular, that $f_+ \ll 1/\si$ and so $t^* > 0$.  As in the proof of Lemma \ref{lem:low}, for sufficiently small $\ve$ we have the uniform bound
\[
 -d_-(t)-D^2 \geq b-a(1+t) -c_1 \ve -D^2
\]
If $t< b/a - 1 - (c_1\ve +D^2)/a$ then the right-hand side is positive and $t<T$.  Note that
\begin{align*}
 t^*+\Delta & = \frac{b}{a}-1 -\frac{\si f(\si)}{a} + \Delta\\
& = \frac{b}{a}-1 -\frac{\si f(\si) - a\Delta}{a}
\end{align*}
If $\si f(\si) - a\Delta > c_1 \ve+D^2$ for sufficiently small $\si$ then $t^*+\Delta < T$.  By (\ref{eq:f+}), this is indeed satisfied.
\\[2mm]
(2) We show that $y_t^0$ hits $d_+(t)+D^2$ in the interval $[t^*,t^*+\Delta]$, which is the same as the process $\e{-\alpha(t)/\ve}y_t^0$ hitting the curve $\e{-\alpha(t)/\ve}(d_+(t)+D^2)$.  The latter will be more convenient to show, since it will lead to a probability involving a Gaussian martingale, for which the reflection principle can be applied.  The process $y_t^0$, when started from $-d_-(t^*)-D^2$ at time $t^*$, is given by
\be\label{eq:ycond}
y_t^0 = -\e{\alpha(t,t^*)/\ve}(d_-(t^*)+D^2) + \frac{\sigma}{\sqrt{\ve}}\int_{t^*}^t \e{\alpha(t,s)/\ve}\dd W_s
\ee
from which we deduce that
\begin{align}
 \e{-\alpha(t)/\ve}y_t^0 & = -\e{-\alpha(t^*)/\ve}(d_-(t^*)+D^2) + \frac{\sigma}{\sqrt{\ve}}\int_{t^*}^t \e{-\alpha(s)/\ve}\dd W_s \nonumber\\ 
 & \bydefis -\e{-\alpha(t^*)/\ve}(d_-(t^*)+D^2) + z_t^0 \label{z0}
\end{align}
For all $t \in [t^*,t^*+\Delta]$, we have
\[
\e{-\alpha(t)/\ve}(d_+(t)+D^2) \leq \e{-\alpha(t^*+\Delta)/\ve}(d_+(t^*)+D^2)
\]
Define $h(t^*,\Delta)\isdefby \e{-\alpha(t^*+\Delta)/\ve}(d_+(t^*)+D^2) +\e{-\alpha(t^*)/\ve}(d_-(t^*)+D^2) > 0$.  Then if
\[
 \sup_{t^* \leq t \leq t^*+\Delta}z_t^0 \geq h(t^*,\Delta)
\]
we must have that $z_t^0 \geq \e{-\alpha(t)/\ve}(d_+(t)+D^2) + \e{-\alpha(t^*)/\ve}(d_-(t^*)+D^2)$ for some $t \in [t^*,t^*+\Delta]$, which is equivalent to $y_t^0 \geq d_+(t)+D^2$.  By the reflection principle applied to $z_t^0$, we have
\begin{align*}
\Prob^{t^*,-d_-(t^*)-D^2}\left\{\sup_{t^* \leq t \leq t^*+\Delta}z_t^0 \geq h(t^*,\Delta)\right\} & = 2\Prob^{t^*,-d_-(t^*)-D^2}\left\{z^0_{t^*+\Delta} \geq h(t^*,\Delta)\right\}\\
& = \frac{2}{\sqrt{\pi}}\int_L^{-\infty}\e{-z^2}\dd z
\end{align*}
where $L = h(t^*,\Delta)/\sqrt{2\Var(z^0_{t^*+\Delta})}$.  If we can show that $L \to 0$ as $\sez$, then we will be done.  We have
\be\label{eq:firstpart}
0 \leq \frac{h(t^*,\Delta)}{\sqrt{2\Var(z^0_{t^*+\Delta})}} = \frac{\e{-\alpha(t^*+\Delta,t^*)/\ve}(d_+(t^*)+D^2)+d_-(t^*)+D^2}{\sqrt{2 \e{2\alpha(t^*)/\ve}\Var(z^0_{t^*+\Delta})}}
\ee
where
\[
 2 \e{2\alpha(t^*)/\ve}\Var(z^0_{t^*+\Delta}) = \frac{2\sigma^2}{\ve}\int_{t^*}^{t^*+\Delta}\e{-2\alpha(s,t^*)/\ve}\dd s \geq \frac{\sigma^2}{A_0}(\e{2A_0 \Delta/\ve}-1)
\]
which means that
\[
 \frac{\e{-\alpha(t^*+\Delta,t^*)/\ve}(d_+(t^*)+D^2)+d_-(t^*)+D^2}{\sqrt{2 \e{2\alpha(t^*)/\ve}\Var(z^0_{t^*+\Delta})}} \leq \frac{\e{A_1 \Delta/\ve}(d_+(t^*)+D^2)+d_-(t^*)+D^2}{\si \sqrt{(\e{2A_0 \Delta/\ve}-1)/A_0}}
\]
Using Taylor expansions for the exponential terms, we see from (\ref{eq:f+}) that the right-hand side tends to zero as $\sez$.
\\[2mm]
(3) Since the distribution of $y_t^0$, when started at $0$, is symmetric about $y=0$, $y_t^0$ satisfies a reflection principle about this line (see Appendix of \cite{nb}) and we have
\be\label{eq:zero}
 \Prob^{t^*,-d_-(t^*)-D^2}\left\{\inf_{t^* \leq t \leq t^* + \Delta} y_t^0 < 0\right\} = 2 \Prob^{t^*,-d_-(t^*)-D^2}\left\{y_{t^*+\Delta}^0 < 0\right\}
\ee
where we recall from (\ref{eq:ycond}) that the conditional process is given by
\[
y_t^0 = -\e{\alpha(t,t^*)/\ve}(d_-(t^*)+D^2) + \frac{\sigma}{\sqrt{\ve}}\int_{t^*}^t \e{\alpha(t,s)/\ve}\dd W_s
\]
Therefore,
\[
2 \Prob^{t^*,-d_-(t^*)-D^2}\left\{y_{t^*+\Delta}^0 \leq 0\right\} = \frac{2}{\sqrt{\pi}}\int_{-\infty}^U \e{-z^2}\dd z
\]
where
\begin{align*}
U = \frac{\e{\alpha(t^*+\Delta,t^*)/\ve}(d_-(t^*)+D^2)}{\sqrt{2\Var(y_{t^*+\Delta}^0)}} \leq \frac{d_-(t^*)-D^2}{\si \sqrt{(\e{2A_1 \Delta/\ve}-1)/A_1}}
\end{align*}
This inequality for $U$, which is negative, comes from the bound
\[
 2\e{-2\alpha(t^*+\Delta,t^*)/\ve} \Var(y_{t^*+\Delta}^0) \leq \si^2 (\e{2A_1 \Delta/\ve}-1)/A_1
\]
Again using Taylor expansions and (\ref{eq:f+}), we see that the upper bound for $U$ goes to $-\infty$ as $\sez$.
\end{proof}
\begin{proof}[Proof of Theorem \ref{main}]
First we suppose that $\si^2 |\ln \si| \ll \ve \ll \si |\ln \si|^{-1/2}$.  Then we can pick $D$ such that $\si^2 |\ln \si| \ll D^2 \ll \ve$ and it follows that
\[
 \frac{D^2}{\si^2}\exp\left\{-\frac{D^2}{\si^2}\right\} \ll \si^2 \ll \ve
\]
This means we can apply Proposition \ref{prop:yleqD} to show $|y_t|$ remains bounded by $D$ almost surely as $\sez$.  We can choose $|\ln \si|^{1/2} \ll f_+(\si) \ll \si/\ve = \min(\si/D^2,\si/\ve)$ so that
\[
 f_+(\si)^2\exp\left\{-f_+(\si)^2\right\} \ll \si^2 \ll \ve
\]
and can apply Lemmas \ref{lem:up} and \ref{lem:low} to $1/f_+$ and $f_+$, respectively, to show that we only need to consider hitting times of $-d_-(t)-D^2$ of the form $t^* = b/a-1-\si f(\si)/a$, where $1/f_+(\si) \leq f(\si) \leq f_+(\si)$.  Then we can apply Proposition \ref{prop:3} to show that the conditional probability of hitting $d_+(t)+D^2$ before $d_-(t)+D^2$ goes to one.

Now suppose that
\[
 \frac{1}{\si^{2/3}}\exp\left\{-\frac{1}{\si^{2/3}}\right\} \ll \ve \ll \si^2 |\ln \si|
\]
Pick $D$ such that $\si^2 |\ln \si| \ll D^2 \ll \si^{4/3}$ and
\[
 \frac{1}{\si^{2/3}}\exp\left\{-\frac{1}{\si^{2/3}}\right\} \ll \frac{D^2}{\si^2}\exp\left\{-\frac{D^2}{\si^2}\right\} \ll \ve
\]
We can again apply Proposition \ref{prop:yleqD} to bound $|y_t|$ by $D$.  Letting $f_+(\si) = D/\si \ll \si/D^2 = \min(\si/D^2,\si/\ve)$, we can apply Lemmas \ref{lem:up} and \ref{lem:low} to $1/f_+$ and $f_+$, respectively.  Then Proposition \ref{prop:3} holds and we are done.

Note that taking instead $\si^{4/3} \ll D^2 \ll \si$ gives the same lower bound on $\ve$ because in that case we need $\ve \gg f_+(\si)^2 \exp\{-f_+(\si)^2\}$ where $f_+(\si) \ll \si/D^2 \ll \si^{-1/3}$.
\end{proof}
We end this paper by commenting on the case of a quadratic potential $U$.  For such potentials, there is no non-linear term, $y_t^1$, and $D \equiv 0$.  We just have to show that $y_t^0$ has probability $1/2$ to hit $d_+(t)$ before $d_-(t)$.  For this we consider the conditional probability to hit $d_+(t)$ when starting from $-d_-(t)$.  If we define the analogue of $\tau_L$ as $\tilde{\tau}_L = \inf\{t \geq 0: |y_t^0| \geq -d_-(t)\}$ then when $\ve \ll \si^2$ we can show this conditional probability goes to one with only an upper bound for $\tilde{\tau}_L$.  Since we do not need to bound $|y_t^0|$, no lower bound on $\ve$ is required.  For $\si^2 \ll \ve \ll \si |\ln \si|^{-1/2}$, a lower bound on $\tilde{\tau}_L$ is needed to show the conditional probability goes to one, but this holds for such $\ve$ without additional assumptions.


\begin{thebibliography}{99}
\bibitem{nb} N.~Berglund, B.~Gentz:
\emph{Noise-induced Phenomena in Slow-Fast Dynamical Systems: A Sample-Paths Approach},
Springer (2006)
\bibitem{nb:pathwise} N.~Berglund, B~Gentz:
\emph{Pathwise description of dynamic pitchfork bifurcations with additive noise},
Probab. Theory Relat. Fields (2002)
\bibitem{nb:beyond} N.~Berglund, B~Gentz:
\emph{Beyond the Fokker-Planck equation: pathwise control of noisy bistable systems},
J. Phys. A: Math. Gen. {\bf 35}, 2057-2091 (2002)
\bibitem{nb:sample-paths} N.~Berglund, B~Gentz:
\emph{A sample-paths approach to noise-induced synchronization: stochastic resonance in a double-well potential},
Ann. Appl. Probab. {\bf 12}, 1419-1470 (2002)
\bibitem{ab} A.~Bovier, M.~Eckhoff, V.~Gayrard, M.~Klein:
\emph{Metastability in reversible diffusion processes I. Sharp asymptotics for capacities and exit times},
J. Eur Math. Soc. {\bf 6},~399-424 (2004)
\bibitem{EMcC78} 
Donald L. Ermark and J.A. McCammon: {\em Brownian dynamics with hydrodynamic interactions}
J. Chem. Phys. {\bf 69}, 1352 (1978)
\bibitem{ey} H. Eyring:
\emph{The activated complex in chemical reactions},
J. Chem. Phys. {\bf 3},~ 107-115 (1935)
\bibitem{fw} M.~Freidlin,~A.~Wentzell:
\emph{Random Perturbations of Dynamical Systems},
Springer-Verlag (1984)
\bibitem{fun1} T.~Funaki:
\emph{Zero Temperature Limit for Interacting Brownian Particles. I. Motion of a Single Body},
Ann. Probab. {\bf 32},~ 1201-1227 (2004)
\bibitem{fun2} T.~Funaki:
\emph{Zero Temperature Limit for Interacting Brownian Particles. II. Coagulation in One Dimension},
Ann. Probab. {\bf 32}, 1228-1246 (2004)
\bibitem{BvG82} W. F. van Gunsteren, H. J. C. Berendsen: {\em Algorithms for brownian dynamics}, Molecular Physics {\bf 45}, 637--647 (1982)
\bibitem{kr} H. A. Kramers:
\emph{Brownian motion in a field of force and the diffusion model of chemical reactions},
Physica {\bf 7},~ 284-304 (1940)
\bibitem{PE02}Juan J. de Pablo, Fernando A. Escobedo: {\em Molecular simulations in chemical engineering: Present and future} AIChE Journal { \bf 48},  
 2716-2721 (2002)
\bibitem{WHV92}N. Wagner, B. Holian and A.F. Voter: {\em Molecular-dynamics simulations of two-dimensional materials at high strain rates}, Phys. Rev. A {\bf 45}, 8457 - 8470 (1992)
\end{thebibliography}
\end{document}